\theoremstyle{plain}
\newtheorem{thm}{Theorem}[section]
\newtheorem{lemma}[thm]{Lemma}
\newtheorem{prop}[thm]{Proposition}
\newtheorem{cor}[thm]{Corollary}
\theoremstyle{definition}
\newtheorem{defn}[thm]{Definition}
\theoremstyle{remark}
\newtheorem{remark}[thm]{Remark}
\newcommand{\nc}{\newcommand}
\newtheorem*{thm*}{{\bf Theorem}}
\newtheorem*{lemma*}{{\bf Lemma}}
\newtheorem*{prop*}{{\bf Proposition}}
\newtheorem*{cor*}{{\bf Corollary}}
\newtheorem*{defn*}{{\bf Definition}}
\def\makeop#1{\expandafter\def\csname#1\endcsname
  {\mathop{\rm #1}\nolimits}\ignorespaces}
\def\makebb#1{\expandafter\def
  \csname bb#1\endcsname{{\mathbb{#1}}}\ignorespaces}
\def\makebf#1{\expandafter\def\csname bf#1\endcsname{{\bf
      #1}}\ignorespaces} 
\def\makegr#1{\expandafter\def
  \csname gr#1\endcsname{{\mathfrak{#1}}}\ignorespaces}
\def\makescr#1{\expandafter\def
  \csname scr#1\endcsname{{\EuScript{#1}}}\ignorespaces}
\def\makecal#1{\expandafter\def\csname cal#1\endcsname{{\mathcal
      #1}}\ignorespaces} 
\def\doLetters#1{#1A #1B #1C #1D #1E #1F #1G #1H #1I #1J #1K #1L #1M
                 #1N #1O #1P #1Q #1R #1S #1T #1U #1V #1W #1X #1Y #1Z}
\def\doletters#1{#1a #1b #1c #1d #1e #1f #1g #1h #1i #1j #1k #1l #1m
                 #1n #1o #1p #1q #1r #1s #1t #1u #1v #1w #1x #1y #1z}
     \def\qed{\qedmark\medbreak}%
\def\qedmark{{\enspace\vrule height 6pt width 5pt depth 1.5pt}}%
\def\Spec{{\rm Spec}\,}
\def\Fpbar{\overline{\bbF}_p}
\def\Fp{{\bbF}_p}
\def\Fq{{\bbF}_q}
\def\Qp{{\bbQ}_p}
\def\Zp{{\bbZ}_p}
\newcommand{\Z}{\mathbb Z}
\newcommand{\Q}{\mathbb Q}
\newcommand{\C}{\mathbb C}
\newcommand{\F}{\mathbb F}
\newcommand{\npr}{\noindent }
\newcommand{\<}{\langle}   
\renewcommand{\>}{\rangle} 
\newcommand{\isoto}{\stackrel{\sim}{\longrightarrow}}
\nc{\embed}{\hookrightarrow}
\newcommand{\ch}{characteristic }
\newcommand{\ac}{algebraically closed }
\newcommand{\dieu}{Dieudonn\'{e} }
\nc{\ol}{\overline}
\nc{\wt}{\widetilde}
\nc{\opp}{\mathrm{opp}}
\def\ul{\underline}
\begin{document}
\renewcommand{\thefootnote}{\fnsymbol{footnote}}
\setcounter{footnote}{-1}
\numberwithin{equation}{section}


\title{A note on supersingular abelian varieties} 
\author{Chia-Fu Yu}
\address{
Institute of Mathematics, Academia Sinica and NCTS  \\
6th Floor, Astronomy Mathematics Building \\
No. 1, Roosevelt Rd. Sec. 4 \\ 
Taipei, Taiwan, 10617} 
\email{chiafu@math.sinica.edu.tw}



\date{\today}
\subjclass[2010]{14K15, 11G10}
\keywords{supersingular abelian varieties, Galois descent} 

\begin{abstract}
In this note we show that any supersingular abelian variety is 
isogenous to a superspecial abelian variety without increasing 
field extensions. The proof uses minimal isogenies and 
the Galois descent. We then construct a superspecial abelian variety 
which not directly defined over a finite field. This answers
negatively to a question of the author [J. Pure Appl. Alg., 2013] 
concerning of endomorphism algebras occurring in Shimura curves.  
Endomorphism algebras of supersingular elliptic curves over 
an arbitrary field are also investigated. We correct a main
result of the author's paper [Math. Res. Let., 2010].  
\end{abstract} 

\maketitle


\section{Introduction}
\label{sec:01}

Throughout this note, $p$ denotes a prime number and all ground fields
considered are of \ch $p$, unless specifically stated otherwise. 
We discuss  
endomorphism algebras and fields of definition of supersingular
 isogeny classes.
There are several equivalent definitions for supersingular abelian
varieties. 
One definition states that an abelian variety $A$ in 
\ch $p$ is {\bf supersingular} if its associated $p$-divisible group
$A[p^\infty]$ has only one slope $1/2$. 
This definition 
is well-defined in the sense that it does not depend on the ground field 
over which $A$ is defined. However, the definition involves 
the \dieu theory and the Manin-\dieu classification of $p$-divisible
groups up to isogeny over an \ac field $k$.    
Oort \cite{oort:subvar} showed that any supersingular abelian isogeny
over $k$ is 
isogenous to a product of supersingular elliptic curves $E_1\times
\dots \times E_g$ over $k$, which provides an alternative but much simpler
definition. 
Our discussion will rely on the following result due to 
Deligne, Ogus and Shioda (cf.~\cite[Section 1.6, p.~13]{li-oort}). 

\begin{thm}\label{Ogus_et}
For any integer $g\ge 2$ and any
supersingular elliptic curves $E_i$ 
over an \ac field $k$ for $1\le i\le 2g$, 
one has $E_1\times \dots \times E_g
\simeq E_{g+1}\times \dots \times E_{2g}$. 
\end{thm}

One can choose $E_i=E_0$ for a supersingular
elliptic curve $E_0$ which is defined over a finite field, or even over
$\Fp$, with Frobenius endomorphism $\pi$ satisfying $\pi^2+p=0$ if $g>1$. 
Thus, any supersingular abelian variety $A$ of dimension $g>1$ 
over $k$ can be obtained by an isogeny  
\begin{equation}
  \label{eq:1.1}
  \varphi: E_0^g \otimes_{\Fp} k \to A. 
\end{equation}
 
Over a not necessarily \ac field $k$, 
the parameterization as (\ref{eq:1.1}) 
is more involved. By (\ref{eq:1.1}), 
an abelian variety $A$ over $k$ is 
supersingular if and only if there exists an finite field extension
$k_1/k$ and a $k_1$-isogeny 
\begin{equation}
  \label{eq:1.2}
  \varphi: E_0^g \otimes k_1 \to A \otimes_k k_1.
\end{equation}

In order to obtain all supersingular abelian varieties over a given
field $k$, the alterations from $E_0^g \otimes k$ by extending 
a finite field extension and an isogeny are all necessary. 
For example, if $A$ is $k$-simple, then $A$
cannot be obtained in the way (\ref{eq:1.2}) without base change over
a field extension. Using the Honda-Tate theorem \cite{tate:ht}, 
it is easy to construct $\Fq$-simple supersingular
abelian varieties over $\Fq$. On the other hand, the ``moduli space'' of
supersingular abelian varieties of dimension $g>1$ 
has positive dimension, while the
locus which consists of superspecial abelian varieties 
is always zero-dimensional. This indicates that a modification by an
isogeny is also necessary.
Recall that an abelian variety $A$
over $k$ is said to be {\bf superspecial} if $A\otimes_k \bar k$ is
isomorphic to a product of supersingular elliptic curves, where $\bar
k$ denotes an algebraic closure of $k$.  

It is natural to ask whether any
supersingular abelian variety over $k$ is $k$-isogenous to a superspecial
abelian variety over $k$, and whether any supersingular abelian variety
is isogenous to $A'$ which can be {\it directly defined} over a finite
field. By ``directly defined'' we mean that there exists 
an abelian variety $A_0$ over a
finite field $\Fq$ contained in $k$ and a $k$-isomorphism $A_0\otimes_{\Fq}
k \simeq A'$. This is different from a weaker property that 
the field of moduli of $A'$ is a finite field. 

If the second question had an affirmative answer, then
one can determine all endomorphism algebras of supersingular
abelian varieties over any field $k$. Indeed, any endomorphism algebra
of this kind can be also realized by a supersingular abelian variety  
which is defined over a finite field. 
By a theorem of Manin and Oort, 
any Weil $q$-number $\pi$ which corresponds to a supersingular isogeny
class is of the form $\sqrt{q} \zeta$, where $\zeta$
is a root of unity  
; see \cite[p.~116]{oort:subvar} (also \cite[Theorem 2.8]{yu:QM}).   
Using Tate's theorem on endomorphism algebras \cite[p.~142]{tate:eav}, 
one shows that except the
case where $\pi=\sqrt{q}\not\in \Q$,  
the endomorphism algebra of any simple supersingular abelian variety 
over $\Fq$ either an abelian CM field or a quaternion division 
algebra over a cyclotomic field \cite[Sect. 3]{xue-yang-yu:sp_as}. 

In this note We prove the following result. 







\begin{thm}\label{1.1}
  (1) Let $A$ be a supersingular abelian variety over a field $k$. 
   There exists a superspecial abelian variety $C$ over $k$ and a
  $k$-isogeny $A\to C$.

  (2) There exists a supersingular abelian variety $A$ over some field
  $k$ such that for any $k$-isogeny $A\sim A'$ of abelian varieties, 
  $A'$ is not directly defined over a finite field.  
\end{thm}

By Theorem~\ref{1.1} (1), any supersingular abelian variety over 
$k$ can be
 parametrized by a pair $(A_0,\varphi)$, where $A_0$ is a superspecial
abelian variety over $k$ and $\varphi:A_0\to A$ is a
$k$-isogeny. 
By Theorem~\ref{Ogus_et}, when $g>1$, any such $A_0$ is a
$k$-form of $E_0^g\otimes k$ and these $k$-forms 
are classified by the Galois cohomology
$H^1(k, \GL_g(\calO))$, where $\calO:=\End(E_0\otimes k_s)$ and $k_s$
is a separable closure of $k$. The 
endomorphism algebra $E^0(E\otimes k_s)$ is a definite
quaternion $\Q$-algebra ramified at $\{p,\infty\}$ 
and $\calO$ is a maximal order. 
In the case that $k\supset \F_{p^2}$, the Galois group
 $\Gamma_k:=\Gal(k_s/k)$ acts trivially on $\GL_g(\calO)$ and we have 
\[ H^1(k,
 \GL_g(\calO))=\Hom(\Gamma_k,\GL_g(\calO))/\GL_g(\calO), \]
 which
 parameterizes conjugacy classes of finite subgroups $H$ in
 $\GL_g(\calO)$ which occur as quotients of $\Gamma_k$. The inverse
 Galois problem enters to play a role.    

It follows from Theorem~\ref{1.1} that there exists a superspecial
abelian variety which is not directly defined over a finite field; see
Remark~\ref{3.2} (2). 
We construct such an example in Section~\ref{sec:03}. 
This answers negatively to a
question in \cite[(Q), p.~912]{yu:QM}. The construction uses
arithmetic properties of definite quaternion $\Q$-algebras 
and an explicit computation of Galois cohomology.





A related topic is a theorem of Grothendieck, which 
states that if $A$ is an
abelian variety with smCM over a field $k$, 
then there is a
finite field extension $k_1/k$, an abelian variety $A_0$ over a finite
field $k_0\subset k_1$ and a $k_1$-isogeny $\varphi:
A\otimes_k k_1\to A_0\otimes_{k_0} k_1$; 
see \cite[pp.~220/221]{mumford:av}, \cite[Theorem 1.1]{oort:cm} and
\cite[Theorem 1.4]{yu:cm}. Recall that an abelian variety $A$ is said
to have {\bf smCM} if 
the endomorphism algebra $\End^0(A)$ of $A$
contains a commutative semi-simple $\Q$-subalgebra 
of degree $2\dim A$. In this case $L$ can be chosen to be a CM
algebra. Similar to the case of supersingular abelian varieties, 
the conditions ``up to isogeny'' and 
``up to a finite extension'' in Grothendieck's theorem are all
necessary. For example, a geometric generic supersingular abelian surface 
is not defined over a finite field and hence 
the first condition is necessary. 
See Section~\ref{sec:03} that 
the second condition ``up to a finite extension'' is also necessary.


As a consequence of Theorem~\ref{1.1}, 
one can in principle compute the endomorphism algebra of 
any supersingular abelian variety. Exploring the case where $g=1$, we
obtain the following result. There is a new example in the case where
$p\not \equiv 1 \pmod {12}$. 



\begin{thm}\label{thm:end-ss}\ 
  \begin{enumerate}
  \item [(1)] If $p\not \equiv 1 \pmod {12}$, then 
    there is a supersingular elliptic
    curve $E$ over a field $k$ so that $\End^0(E)=\Q$.
  \item [(2)] If $p\equiv 1 \pmod {12}$, then for 
    any supersingular elliptic curve
    $E$ over an arbitrary field 
    $k$, one has $\End^0(E)\neq \Q$. In other words,
    $\End^0(E)$ is isomorphic to a semi-simple
    $\Q$-subalgebra of degree $2$ or $4$ in 
    the definite quaternion $\Q$-algebra $B_{p,\infty}$ ramified
    exactly at $\{p,\infty\}$.   
  \end{enumerate}
\end{thm}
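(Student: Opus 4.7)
My approach is Galois descent. For any supersingular $E/k$, $\End^0(E) = B^{G_k}$ with $B := B_{p,\infty}$, the $G_k$-action factoring through the inner action $G_k \to \Aut(B) = B^\times / \Q^\times$. Writing $E$ as a twist of $E_0 \otimes k$ (with $E_0$ supersingular over a subfield $k_0 \in \{\F_p, \F_{p^2}\}$) by a cocycle $c \in H^1(G_k, \Aut(E_0 \otimes \bar k))$, the twisted action reads $\sigma \cdot b = c(\sigma)\,\sigma_0(b)\,c(\sigma)^{-1}$; the untwisted $\sigma_0$ is trivial when $\F_{p^2} \subset k$, and (for $E_0/\F_p$) is conjugation by the Frobenius $\pi \in \O$ satisfying $\pi^2 = -p$ when $\F_{p^2} \not\subset k$. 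Hence $\End^0(E) = Z_B(\Lambda)$, where $\Lambda \subset B$ is the $\Q$-subalgebra generated by the elements $c(\sigma)\pi^{\epsilon(\sigma)}$.

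For Part (2), the hypothesis $p \equiv 1 \pmod{12}$ excludes $j = 0, 1728$ among supersingular $j$-invariants, so $\Aut(E_0 \otimes \bar k) = \{\pm 1\}$ is central in $B$; twisting by $\pm 1$ does not alter the conjugation action, giving $\End^0(E) = \End^0(E_0 \otimes k)$. If $\F_{p^2} \subset k$, every supersingular $E_0/\F_{p^2}$ has $\pi_0 = \pm p$ (otherwise $\Q(\pi_0) \in \{\Q(\sqrt{-3}), \Q(i)\}$, contradicting $p \equiv 1 \pmod{12}$), so the Galois action on $\O$ is trivial and $\End^0(E) = B$. If $\F_{p^2} \not\subset k$, then $j(E) \in k \cap \F_{p^2} = \F_p$, one takes $E_0/\F_p$, and $\End^0(E) = Z_B(\pi) = \Q(\sqrt{-p})$. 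In both cases $\dim_\Q \End^0(E) \ge 2$, whence $\End^0(E) \ne \Q$.

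For Part (1), I would pick a supersingular $E_0/\F_p$ with $|\Aut(E_0)| > 2$: namely $E_0: y^2 = x^3 + 1$ (with $\Aut = \mu_6$) when $p \equiv 2 \pmod 3$, $p > 3$; or $E_0: y^2 = x^3 + x$ (with $\Aut = \mu_4$) when $p \equiv 7 \pmod{12}$. Setting $k = \F_p(t)$ and $E: y^2 = x^3 + t$ (resp.\ $y^2 = x^3 + tx$), the Galois closure of the trivializing extension is $K = \F_{p^2}(t^{1/n})$ with $n = 6$ (resp.\ $4$), and the isomorphism $\phi \colon E \otimes K \to E_0 \otimes K$, $(x,y) \mapsto (x/t^{2/n}, y/t^{3/n})$, produces a cocycle whose image is all of $\mu_n \subset \Aut(E_0)$. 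Since $\F_{p^2} \not\subset k$, the algebra $\Lambda$ contains both $\Q(\mu_n) \in \{\Q(\sqrt{-3}), \Q(i)\}$ and $\Q(\pi) = \Q(\sqrt{-p})$; for $p > 3$ these two quadratic subfields of the division algebra $B$ are distinct, so together they generate $B$. Thus $\Lambda = B$ and $\End^0(E) = Z_B(B) = \Q$. The cases $p = 2, 3$ are handled analogously, exploiting the even larger automorphism groups in those characteristics.

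The main obstacle is the explicit cocycle computation in Part (1): one must verify the cocycle identity while tracking the nontrivial $G_{\F_p}$-action on $\mu_n \subset \Aut(E_0)$ (inversion for $p \equiv -1 \pmod n$) and confirm that $\Q(\mu_n)$ and $\Q(\pi)$ are genuinely distinct quadratic subfields of $B_{p,\infty}$, which is a straightforward discriminant comparison.
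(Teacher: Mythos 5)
Your proposal is correct in substance and, for the main cases, follows the same strategy as the paper: realize $E$ as a Galois twist of a model over $\F_p$ or $\F_{p^2}$ and compute $\End^0(E)$ as the fixed algebra of the twisted action, i.e.\ the centralizer in $B_{p,\infty}$ of the elements $c(\sigma)\pi^{\epsilon(\sigma)}$ (this is formula (2.1) of the paper in adelic clothing). For Part (2) the paper splits exactly as you do ($\F_{p^2}\subset k$ versus not); the only difference is that in the second case it descends $E$ to an $\F_p$-model and invokes Tate's theorem to get $\Q(\sqrt{-p})$, whereas you compute the Frobenius-conjugation action directly, using $Z_B(\pi^a)=Z_B(\pi)$ for $a$ odd via $\pi^2=-p$; both are fine. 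For Part (1) with $p>3$, your explicit quartic and sextic twists $y^2=x^3+tx$, $y^2=x^3+t$ over $\F_p(t)$ realize concretely the cocycle the paper writes down abstractly on the dihedral extension $\F_{p^2}(T^{1/m})/\F_p(T)$, and your structural shortcut (two distinct quadratic subfields of the quaternion division algebra generate it, since every proper $\Q$-subalgebra of $B_{p,\infty}$ is $\Q$ or a quadratic field, hence $\End^0(E)=Z_B(B)=\Q$) correctly replaces the paper's direct fixed-point computation. The genuine divergence is $p=2,3$: the paper works over $k=\F_{p^2}(T)$, where the Galois action on endomorphisms is trivial, so it must produce a nonabelian image (the quaternion group of order $8$, resp.\ the full order-$12$ group) and appeals to the inverse Galois problem and Hilbert irreducibility; your plan of staying over $\F_p(t)$ and letting $\Q(\pi)$ ($=\Q(\sqrt{-2})$ or $\Q(\sqrt{-3})$) supply the second quadratic field does work and avoids IGP entirely --- e.g.\ $y^2+y=tx^3$ for $p=2$ and $y^2=x^3+tx$ for $p=3$ --- but as written it is only a gesture: your displayed equations are not valid in characteristic $2$ (and $y^2=x^3+1$ degenerates in characteristic $3$), and one must redo the cocycle computation while tracking the nontrivial Frobenius action on the nonabelian automorphism groups of order $24$, resp.\ $12$. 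Fleshing out that case would make your argument complete, and in fact somewhat more elementary than the paper's treatment of $p=2,3$.
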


The proof of Theorem~\ref{thm:end-ss} (1) is given by an explicit
construction, which depends on a construction of certain Galois 
field extensions. The involved inverse Galois problem 
(IGP) is fortunately rather easy to solve.  

Our proof of Theorem~\ref{thm:end-ss} (2) also gives 
all possible $\Q$-algebras that can occur as the endomorphism algebras of
supersingular elliptic curves over an arbitrary field when $p\equiv 1
\pmod {12}$. Furthermore, all of them also occur  
as the endomorphism algebras of those which are  defined
over finite fields. 



\section{Minimal isogenies over perfect fields}
\label{sec:M}

In this section, we show the existence of minimal isogenies for
abelian varieties and $p$-divisible groups over perfect
fields. This generalizes previous results of the author \cite{yu:endo}
where the ground field is assumed to be algebraically closed.  
A special case of this results will be used in the proof of
Theorem~\ref{1.1} (1).

Let $k$ be a perfect field of \ch $p$. 
Let $W:=W(k)$ be the ring of Witt
vectors over $k$, and $B(k)$ the fraction field of $W(k)$. Let
$\sigma$ be the Frobenius map on $W$ and $B(k)$, respectively. 
We use the covariant \dieu theory \cite{zink:cartier}.
\dieu modules considered here are assumed to be finite and free as 
$W$-modules.

To each rational number $0\le \lambda\le 1$, one associates a pair
$(a,b)$ of coprime
non-negative integers so that $\lambda=b/(a+b)$.    
For each pair $(a,b)\neq (0,0)$ as above, write
$M_{(a,b)}$ for the \dieu module $W[F,V]/(F^a-V^b)$ over $k$ of slope 
$\lambda=b/(a+b)$.

We write a Newton polygon of slopes in $[0,1]$ as a finite
formal sum $\beta=\sum_{i=1}^s r_i (a_i,b_i)$,  
or express it in terms of a slope sequence $\beta=(\lambda_1^{(r_1)},
\dots, \lambda_s^{(r_s)})$,
where each $0\le \lambda_i\le 1$ is a rational number, $r_i\in \bbN$ is a
positive integer, and $(a_i,b_i)$ is the pair associated to
$\lambda_i$ (By convention, the multiplicity of the slope 
$\lambda_i$ is $(a_i+b_i) r_i$). 
Let $M$ be a \dieu module over $k$ and put
$N:=M\otimes_W B(k)$.  
By the Manin-\dieu Theorem (\cite{manin:thesis}, Chap. II,
``Classification Theorem'', p.~35), the isocrystal
$N':=N\otimes_{B(k)} B(k')$, where $k'\supset k$ is an \ac overfield,  
admits a unique decomposition
\begin{equation}
  \label{eq:M.1}
  N'=\oplus_i N'_{\lambda_i}, \quad N'_{\lambda_i} \simeq 
  (M_{(a_i,b_i)}\otimes_W B( k'))^{r_i}, 
\end{equation}
for some Newton polygon $\beta=\sum_i r_i(a_i,b_i)$. The invariant
$\beta$ is independent of the choice of $k'$ and is called the 
{\bf Newton polygon} of
$M$. By the existence of slope filtration for \dieu modules 
and the splitting theorem up to isogeny  
\cite[Theorem 2.5.1]{katz:slope}, the decomposition (\ref{eq:M.1}) is
actually defined over $B(k)$. Namely, one has a unique decomposition
of $N$ into isotypic components   
\begin{equation}
  \label{eq:M.2}
  N=\oplus_i N_{\lambda_i}, \quad N_{\lambda_i}\otimes_{B(k)}
  B(k')=N'_{\lambda_i}. 
\end{equation}
See Zink~\cite{zink:slope} and Oort and Zink \cite{oort-zink} for a
far generalization of the slope filtration for $p$-divisible groups 
over normal base schemes. 


Let $(a,b)\neq (0,0)\in (\Z_{\ge 0})^2$ be a pair as before, and put 
$n=a+b$. Denote by
$\bfM_{(a,b)}$ the \dieu module over $\Fp$ generated by
the elements $e_i$, for $i\in \Z_{\ge 0}$, with relation $e_{i+n}=pe_i$, 
as a $\Zp$-module, 
and with operations $Fe_i=e_{i+b}$ and $Ve_i=e_{i+a}$ for all $i\in
\Z_{\ge 0}$. Clearly, $F^n=p^b$ and $V^n=p^a$ on $\bfM_{(a,b)}$. 
For any Newton polygon $\beta=\sum_{i=1}^s r_i (a_i, b_i)$, define 
\[ \bfM(\beta):=\bigoplus_{i=1}^s
\bfM_{(a_i,b_i)}^{r_i}. \]

Let $M_\lambda$ be an isoclinic \dieu module over $k$ of slope 
slope $\lambda=b/(a+b)$. There are two integers $x$ and $y$ 
such that $bx+ay=1$. We define an $\sigma^{x-y}$-linear operator on
$N_\lambda:=M_\lambda\otimes_W B(k)$ by
\begin{equation}
  \label{eq:M.3}
  \Phi_\lambda=F^x V^y: N_\lambda\to N_\lambda. 
\end{equation}
If $M_\lambda=\bfM_{(a,b)}$, then one has $\Phi_\lambda(e_i)=e_{i+1}$
for all $i$. 

\begin{defn}
  A \dieu module $M$ over $k$ with Newton polygon $\beta$
  is said to be {\bf minimal} if for any
  \ac field $k'\supset k$, one has $M\otimes_W W(k')\simeq
  \bfM(\beta)\otimes_{\Zp} W(k')$. 
\end{defn}
  
\begin{lemma}\label{M.2}
  Let $M$ be a \dieu module over an \ac field $k$ with Newton polygon
  $\beta$. The following are equivalent:

  \begin{itemize}
  \item[(a)] There is an isomorphism $M\simeq 
  \bfM(\beta)\otimes_{\Zp} W(k)$ of \dieu modules. 
\item[(b)] The endomorphism ring $\End(M)$ is a maximal order of the
  endomorphism algebra $\End^0(M)=\End(M)\otimes_{\Zp} \Qp$ of $M$.
 
\item [(c)] $M$ is the direct sum of its isotypic components
  $M_\lambda$ and each component $M_\lambda$ is a minimal \dieu module. 

  \end{itemize}
If $M$ is isoclinic with slope $\lambda=b/(a+b)$, then the statements
(a), (b), or (c) is equivalent to 
  \begin{itemize}

\item [(d)] $F^{a+b}M=p^b M$ and $\Phi_\lambda(M)\subset M$, where
  $\Phi_\lambda$ is defined in (\ref{eq:M.3}).
\end{itemize}
\end{lemma}
\begin{proof}
  See \cite[Lemmas 3.3 and 3.4]{yu:endo}. \qed
\end{proof}

By Lemma~\ref{M.2}, a \dieu module $M$ over $k$ is
minimal if and only if so is $M\otimes_W W(k')\simeq
\bfM(\beta)\otimes_{\Zp} W(k')$ for one \ac field $k'\supset k$.

\begin{defn}[cf.~{\cite[Section 1]{li-oort}} and 
  {\cite[Section 4]{yu:endo}}]\label{M.3}
  Let $X$ be a $p$-divisible group over a field $k_1$ of \ch $p$. 

\npr (1) We say that $X$ is {\bf minimal} if the \dieu module of
$X\otimes_{k_1} \bar 
k_1$ satisfies one of the equivalent conditions in Lemma~\ref{M.2}.

\npr (2) The {\bf minimal isogeny} of
  $X$ is a pair $(X_0, \varphi)$, where $X_0$ is a minimal $p$-divisible
  group over $k_!$, and $\varphi:X_0\to X$ is an isogeny over $k_1$ such
  that for any other pair $(X_0',\varphi')$ as above, there exists an
  isogeny $\rho: X'_0\to X_0$ such that
  $\varphi'=\varphi\circ\rho$. Note that the morphism $\rho$ is unique
  if it exists.
\end{defn}

\begin{lemma}\label{M.4}
  Let $M$ be a \dieu module over $k$ with Newton
  polygon $\beta$. Then there exists a unique smallest minimal \dieu
  module $M^{\rm min}$ over $k$ containing $M$. 
  Dually, there is a unique largest
  minimal \dieu submodule $M_{\rm min}$ of $M$.
\end{lemma}
\begin{proof}
  We first show that if $M_1$ and $M_2$ are minimal \dieu  modules (of full
  rank) in $N:=M\otimes B(k)$ then so are $M_1+M_2$ and $M_1\cap
  M_2$. 
  For any \ac overfield $k'$, put $M_i':=M_i\otimes
  W(k')$ for $i=1,2$. Clearly we have 
  $(M_1+M_2)\otimes W(k')=M_1'+M_2'$ and
  $(M_1\cap M_2)\otimes W(k')=M_1'\cap M_2'$. Thus, we are reduced to
  the case where $k$ is algebraically closed and this follows from
  Lemma~\ref{M.2} as both $M_1+M_2$ and $M_1\cap M_2$ satisfy the
  criterion (d). Therefore, the uniqueness of $M^{\rm min}$ 
  has been proved and it remains to show that there is a minimal 
  \dieu module over $k$ containing $M$. 

  Put $M_\lambda=M\cap N_\lambda$ and by (\ref{eq:M.2}) we have $M\subset
  \oplus_{\lambda} M_\lambda$, where $N_\lambda$ is the isotypic
  component of $N$ of slope $\lambda$. It then suffices to show that
  each $M_\lambda$ is contained in a minimal \dieu module, and we can
  assume that $M=M_\lambda$ is isoclinic of slope $\lambda=b/(a+b)$. 
  Let $P(M)$ be the
  $W$-submodule generated by $M$ which is stable under the operators 
  $F^{a+b}p^{-b}$ and $\Phi_\lambda$. By \cite[Lemma
  4.2]{yu:endo}, the \dieu module  $M\otimes_W W(k')$ is
  contained in a minimal \dieu module, which
  is stable under these operators. It follows that 
  $P(M)$ is a $W$-module of finite rank and it is a \dieu module. 
  As $P(M)\otimes
  W(k')$ is a \dieu module stable under $F^{a+b}p^{-b}$ and
  $\Phi_\lambda$, it follows from Lemma~\ref{M.2} that $P(M)$ is
  minimal. \qed       
\end{proof}

The minimal \dieu module $M_{\rm min}$ (resp.~$M^{\rm min}$)
constructed in Lemma~\ref{M.4} 
is called the {\bf minimal \dieu
submodule} (resp.~{\bf overmodule}) of $M$.
By Lemma~\ref{M.4}, we have  

\begin{cor}\label{M.5}
  The minimal isogeny of any $p$-divisible group $X$ over $k$ exists. 
\end{cor}

Let $\calO$ be an order of a finite-dimensional semi-simple algebra
over $\Q_p$. A $p$-divisible $\calO$-module is a pair $(X,\iota)$,
where $X$ is a $p$-divisible group and $\iota:\calO\to \End(X)$ is a
ring monomorphism. 

\begin{prop}\label{M.6}
  Let $(X,\iota)$ be a $p$-divisible $\calO$-module over 
  $k$ and let
  $\varphi:X_0\to X$ be the minimal isogeny of $X$ over $k$. Then
  there is a unique ring monomorphism $\iota_0:\calO\to \End(X_0)$
  such that $\varphi$ is $\calO$-linear. 
\end{prop}
\begin{proof}
  The same statement is proved in \cite[Prop.~4.8]{yu:endo} where the
  ground field is assumed to be algebraically closed. The same proof
  also works for the present situation. \qed
\end{proof}

\begin{remark}\label{M.7}
  In a recent preprint \cite{harashita:satu_NP}, S.~Harashita shows that any
  $p$-divisible group over an arbitrary field $k$ of \ch $p$ 
  is $k$-isogenous to a
  minimal $p$-divisible group. His result can improve
  Corollary~\ref{M.5} and Proposition~\ref{M.6} without the perfectness
  assumption of the ground field.  
\end{remark}

\section{Proof of Theorem~\ref{1.1} (1)}
\label{sec:02}



We recall briefly the Galois descent. 
Let $X_0$ be a quasi-projective
algebraic variety over a field $k$ and $K/k$ a finite Galois
extension with group $\Gal(K/k)$. Write $X_{0K}:=X_0\otimes_k K$.
Let $E(K/k, X_0)$ denote the set of $k$-isomorphism classes of
$K/k$-forms $X$ of $X_0$, i.e.~$X\otimes_k K$ is isomorphic to
$X_{0K}$ over $K$. There is a natural bijection 
\begin{equation}
  \label{eq:2.1}
  E(K/k, X_0) \isoto H^1(\Gal(K/k), \Aut(X_{0K})), \quad [X_0]\mapsto
  [{\rm id}_{X_0}]. 
\end{equation}
Suppose $X_0'$ is a quasi-projective variety over $k$
and $\eta: X_{0K} \isoto X_0'\otimes_k K$ is a
$K$-isomorphism. For any $\sigma\in \Gal(K/k)$, 
put $a_\sigma:=\eta^{-1}\circ \sigma(\eta)\in \Aut(X_{0K})$. 
Then $\{a_\sigma\}$ is a $1$-cocycle with values in
$\Aut(X_{0K})$, i.e.~one has $a_{\sigma\tau}=a_\sigma \sigma(a_\tau)$, for
$\sigma,\tau\in \Gal(K/k)$. The class of $\{a_\sigma\}$ in 
$H^1(\Gal(K/k), \Aut(X_{0K}))$  
is uniquely determined by the $k$-isomorphism class of
$X_0'$. Conversely, the Galois descent asserts that 
any $1$-cocycle $\{a_\sigma\}$ with values in
$\Aut(X_{0K})$ is represented by a pair $(X_0', \eta)$ as above.
This describes the bijection (\ref{eq:2.1}).


Let $\End(X)$ denote the monoid of all
$k$-morphisms from a variety $X/k$ to itself.

\begin{lemma}\label{end}
  Let $X_0'\in E(K/k, X_0)$ and $\{\xi_\sigma\}$ a $1$-cocycle
  associated to $X_0'$. Then there is an isomorphism of monoids 
\begin{equation}
  \label{eq:2.2}
  \begin{split}
  \End(X_0')\simeq \{a\in \End(X_{0}\otimes K)\mid \xi_\sigma
  \sigma(a) 
  \xi_\sigma^{-1} =a, \forall\,
  \sigma\in \Gal(K/k) \}.    
  \end{split}
\end{equation}
\end{lemma}
\begin{proof}
  Choose an isomorphism $\eta: X_{0K}\simeq X'_{0K}$ such that 
  $\xi_\sigma=\eta^{-1} \sigma(\eta)$.
  We have an isomorphism
  $\alpha:\End(X'_{0K})\simeq \End(X_{0K})$ by $\alpha(a'):=\eta^{-1} a'
  \eta$, i.e.~there is a commutative diagram 
\[ 
\begin{CD}
  X_{0K} @>\eta>> X'_{0K} \\
  @VV{\alpha(a')}V @VV{a'}V \\
 X_{0K} @>\eta>> X'_{0K}. \\
\end{CD}
\]
Put $a=\alpha(a')=\eta^{-1} a' \eta$ and
$b=\alpha(\sigma(a'))=\eta^{-1} \sigma(a') \eta$. One has
\[ \sigma(a)=\sigma(\eta^{-1}) \sigma(a') \sigma(\eta)=
\sigma(\eta^{-1}) \eta [\eta^{-1} \sigma(a') \eta] \eta^{-1}
\sigma(\eta)=\xi_\sigma^{-1}\cdot b \cdot
\xi_\sigma. \]  
That is, if one identifies $\End(X'_{0K})$ with $\End(X_{0K})$ by
$\alpha$, then the new $\Gal(K/k)$-action on $\End(X_{0K})$ induced
from the form $X'_0$ is given by $\sigma: a\mapsto \xi_\sigma \sigma(a)
\xi_\sigma^{-1}$. Thus,  
\begin{equation}
  \label{eq:2.3}
  \begin{split}
  \End(X_0')&=\{a'\in \End(X'_0\otimes K)\mid \sigma(a')=a', \forall\,
   \sigma \in \Gal(K/k)\}\\
   & \isoto \{a\in \End(X_0\otimes K)\mid \xi_\sigma \sigma(a)
   \xi_\sigma^{-1} 
   =a, \forall\,
   \sigma\in \Gal(K/k) \}.    
  \end{split} 
\end{equation}
\end{proof}

\begin{lemma}\label{22}
  Let $A$ be a supersingular abelian variety of dimension $g$ over a
  field $k$ of \ch $p$. There is a finite purely
  inseparable extension field $L/k$ and an $L$-isogeny 
  $A_L=A\otimes_k L \to B$,
  where $B$ is a superspecial abelian variety over $L$.
\end{lemma}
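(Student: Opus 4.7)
The plan is to pass to the perfect closure $k^{\rm pf} := \bigcup_{n \ge 0} k^{1/p^n}$, produce the isogeny over this perfect base, and then descend by finite presentation. Since $k^{\rm pf}$ is the filtered union of the finite purely inseparable extensions $k^{1/p^n}$ of $k$, any isogeny of finite-type abelian varieties defined over $k^{\rm pf}$ is already defined over some $k^{1/p^n}$, which is a finite purely inseparable extension $L/k$ of the required type.

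Over the perfect field $k^{\rm pf}$, contravariant Dieudonn\'e theory gives a fully faithful functor $G \mapsto M(G)$ from $p$-divisible groups to finite free $W(k^{\rm pf})$-modules equipped with $F$ and $V$ satisfying $FV = VF = p$. Set $M := M(A[p^\infty])$. Supersingularity of $A$ says all Newton slopes of the isocrystal $M[1/p]$ equal $1/2$; by the Dieudonn\'e--Manin classification, $M[1/p]$ admits $W(k^{\rm pf})$-lattices $N$ satisfying the superspecial condition $FN = VN$. After replacing $N$ by $p^m N$ for $m$ sufficiently large, I may assume $N \subset M$ has finite cokernel.

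Under the Dieudonn\'e equivalence, the inclusion $N \hookrightarrow M$ corresponds to an isogeny $\phi \colon A[p^\infty]_{k^{\rm pf}} \to G_0$ of $p$-divisible groups over $k^{\rm pf}$, whose target $G_0$ has Dieudonn\'e module $N$ and is therefore superspecial. Let $K := \ker(\phi)$, a finite locally free subgroup scheme of $A_{k^{\rm pf}}$ (contained in $A[p^n]$ for $n \gg 0$), and set $B := A_{k^{\rm pf}}/K$. Then $B$ is an abelian variety over $k^{\rm pf}$ with $B[p^\infty] = G_0$; the condition $FN = VN$ on the Dieudonn\'e module forces $a(B) = g$, so $B$ is superspecial. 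The quotient map $A_{k^{\rm pf}} \to B$ is the desired isogeny.

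The main technical point is the existence of a superspecial $W(k^{\rm pf})$-sublattice of the supersingular isocrystal $M[1/p]$, which rests on the Dieudonn\'e--Manin structure theory for isocrystals of slope $1/2$. Everything else, namely the passage between Dieudonn\'e modules and $p$-divisible groups, the realization of $B$ as a finite quotient of $A_{k^{\rm pf}}$, and the final descent to some $k^{1/p^n}$, is formal. Working over the perfect field $k^{\rm pf}$ throughout bypasses any Galois-descent subtleties, leaving the separable/Galois issues to be handled by the subsequent proof of Proposition~\ref{21}.
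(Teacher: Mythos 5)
Your argument is essentially the paper's own proof: pass to the perfect closure, use Dieudonn\'e theory to produce a superspecial lattice commensurable with the Dieudonn\'e module of $A$, realize this by an isogeny to a superspecial abelian variety over the perfect closure, and then descend the finite kernel to a finite purely inseparable subextension of $k$ (the paper works with covariant modules and a containment $M\subset M'$, you with contravariant ones and $N\subset M$, which is immaterial). One small correction: if $k$ does not have a finite $p$-basis the fields $k^{1/p^n}$ are \emph{not} finite over $k$, so rather than descending to some $k^{1/p^n}$ you should descend $\ker\phi$ (a finitely presented group scheme) to a finitely generated, hence finite, purely inseparable subextension of $k^{\mathrm{pf}}/k$ --- which is exactly how the paper states this step; also note that the existence of a superspecial lattice must be had over the perfect (not algebraically closed) field, so Dieudonn\'e--Manin applies only after base change and a short saturation argument, or a citation of the known fact, is needed over $k^{\mathrm{pf}}$ itself.
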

\begin{proof}
  Let $k'$ be the perfect closure of $k$. It suffices to show that
  there is a $k'$-isogeny $\varphi: A_{k'}\to B$ for a superspecial
  abelian variety $B$ over $k'$ because $\ker \varphi$ is defined over a
  finite extension $L$ of $k$ in $k'$ and hence both $B$ and $\varphi$
  are defined over $L$, which is purely inseparable over $k$. 
  Let $M$ be the covariant \dieu module
  of $A_{k'}$. By Lemma~\ref{M.4}, $M$ is contained in a superspecial \dieu
  module $M'$ over $k'$. Therefore there is an (necessarily
  superspecial) abelian variety $B$
  over $k'$ and a $k$-isogeny $\varphi: A_{k'}\to B$ which realizes
  the chain of \dieu modules $M\subset M'$. \qed    
\end{proof}

\begin{lemma}\label{23}
  Let $A_1$ and $A_2$ be two abelian varieties over $k$ and $L/k$ a
  primary field extension (i.e.~$k$ is separably algebraically closed
  in $L$). Then we have an
  isomorphism
\[ \Hom_k(A_1,A_2) \isoto \Hom_L(A_{1,L}, A_{2,L}). \] 
\end{lemma}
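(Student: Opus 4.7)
The plan is to combine classical Galois descent with the structural consequence of the primary hypothesis on $L/k$. The map
\[ \Hom_k(A_1,A_2)\to \Hom_L(A_{1,L},A_{2,L}) \]
is always injective (a morphism over $k$ that becomes zero over $L$ is already zero), so only surjectivity requires argument.

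First I would fix compatible algebraic closures $\bar k\subset\bar L$, which induce an embedding $k^{\sep}\subset L^{\sep}$. Because homomorphisms of abelian varieties are insensitive to extension of algebraically closed base fields, and also to purely inseparable extensions, the natural base-change map
\[ R:=\Hom_{\bar k}(A_{1,\bar k},A_{2,\bar k})\longrightarrow \Hom_{\bar L}(A_{1,\bar L},A_{2,\bar L}) \]
is an isomorphism. Standard Galois descent then gives the identifications
\[ \Hom_k(A_1,A_2)=R^{\Gal(k^{\sep}/k)},\qquad \Hom_L(A_{1,L},A_{2,L})=R^{\Gal(L^{\sep}/L)}, \]
with the Galois actions coming from the chosen embedding $k^{\sep}\subset L^{\sep}$ via restriction.

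The key step is the surjectivity of the natural restriction homomorphism $\rho\colon \Gal(L^{\sep}/L)\to \Gal(k^{\sep}/k)$. This is precisely where the primary hypothesis is used: since $k$ is separably algebraically closed in $L$, one has $k^{\sep}\cap L=k$, so $k^{\sep}\cdot L$ is Galois over $L$ with group canonically $\Gal(k^{\sep}/k)$, and $\rho$ factors as $\Gal(L^{\sep}/L)\onto \Gal(k^{\sep}\cdot L/L)\isoto \Gal(k^{\sep}/k)$. Since every element of $R$ already descends to $\Hom_{k^{\sep}}(A_{1,k^{\sep}},A_{2,k^{\sep}})$, the $\Gal(L^{\sep}/L)$-action on $R$ factors through $\rho$; combined with the surjectivity of $\rho$ this yields $R^{\Gal(L^{\sep}/L)}=R^{\Gal(k^{\sep}/k)}$, which is the desired isomorphism.

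I expect the only genuinely technical point to be checking that the $\Gal(L^{\sep}/L)$-action on $R$ really factors through $\rho$, which comes down to the descent of $R$ to $k^{\sep}$ noted above; the Galois-theoretic identity $\Gal(k^{\sep}\cdot L/L)\cong \Gal(k^{\sep}/k)$ is the conceptual heart of the argument and is a direct reformulation of the hypothesis that $L/k$ is primary.
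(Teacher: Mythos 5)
Your proof is correct, but it follows a different route from the paper, whose entire proof is a citation of \cite[Lemma 1.2.1.2]{chai-conrad-oort}: there the statement is obtained in one stroke from the fact that the Hom-scheme $\ul{\Hom}_k(A_1,A_2)\to\Spec k$ is unramified (by rigidity of homomorphisms of abelian schemes), hence \'etale, so that an $L$-point of it factors through a $k$-point whenever $L/k$ is primary. You instead split the primary hypothesis into its separable and purely inseparable parts: Galois descent identifies $\Hom_k$ and $\Hom_L$ with the invariants of $R$ under $\Gal(k^{\sep}/k)$ and $\Gal(L^{\sep}/L)$, the condition $k^{\sep}\cap L=k$ gives surjectivity of the restriction map $\Gal(L^{\sep}/L)\to\Gal(k^{\sep}/k)$, and the $\Gal(L^{\sep}/L)$-action on $R$ factors through this restriction because every homomorphism over an algebraically closed field is already defined over the separable closure. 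All of these steps are sound, so the argument works, provided you grant the two classical invariance statements you import (insensitivity of Hom to extensions of algebraically closed fields and to purely inseparable extensions); note that the second of these is precisely where the paper's ``key ingredient'' (unramifiedness of the Hom-scheme, i.e.\ rigidity) is hiding, so at bottom both arguments rest on the same input. Your version buys a concrete, self-contained Galois-theoretic reduction, while the cited proof is shorter and treats the separable and inseparable directions uniformly. A minor presentational point: for the isomorphism $R\isoto\Hom_{\bar L}(A_{1,\bar L},A_{2,\bar L})$ only the algebraically closed invariance is needed; the purely inseparable invariance is what you need---and do in fact use---to pass between $k^{\sep}$ and $\bar k$ and between $L^{\sep}$ and $\bar L$ in the descent identifications.
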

\begin{proof}
  See \cite[Lemma 1.2.1.2]{chai-conrad-oort}. A key ingredient is
  that the Hom-scheme $\ul {\Hom}_k(A_1,A_2)\to \Spec k$ is
  unramified. This follows from the rigidity of endomorphisms of
  abelian schemes. \qed
\end{proof}

\begin{lemma}\label{24}
  Let $L/k$ be a
  finite purely inseparable extension field and $B$ a superspecial
  abelian variety over $L$ of dimension $g\ge 2$. Then there exists
  an abelian variety $B'$ over $k$ and an $L$-isomorphism $B'_L\simeq B$.
\end{lemma}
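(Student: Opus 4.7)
The plan is to realize $B$ as a twisted form of a standard abelian variety defined over $\F_p$ and to transport the twisting cocycle from $L$ down to $k$ using that $L/k$ is purely inseparable.

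First, I would invoke Theorem~\ref{Ogus_et} together with the remark that for $g\ge 2$ the unique superspecial abelian variety of dimension $g$ over $\bar k$ is already defined over $\F_p$, to produce a superspecial abelian variety $B_0$ over $\F_p$ with $B\otimes_L\bar L\simeq B_0\otimes_{\F_p}\bar L$. The initial technical step is to sharpen this to an isomorphism already defined over $L^{\sep}$: the scheme $\underline{\Isom}_L(B_0\otimes_{\F_p}L,\,B)$ is unramified over $L$ by the rigidity of morphisms between abelian varieties (the same input used in Lemma~\ref{23}), hence étale since $L$ is a field; because it has a $\bar L$-point by Theorem~\ref{Ogus_et}, the residue field at such a point is separable algebraic over $L$ and therefore lies in $L^{\sep}$. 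Thus one obtains an $L^{\sep}$-isomorphism $f:B_0\otimes_{\F_p}L^{\sep}\isoto B\otimes_L L^{\sep}$.

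Next, I would form the $1$-cocycle $\xi_\sigma:=\sigma(f)\circ f^{-1}\in\Aut_{L^{\sep}}(B_0\otimes_{\F_p}L^{\sep})$ for $\sigma\in\Gal(L^{\sep}/L)$ and transport it to $k$. The crucial inputs here are two-fold: restriction defines a canonical isomorphism $\Gal(L^{\sep}/L)\isoto\Gal(k^{\sep}/k)$ because $L/k$ is purely inseparable; and since $B_0$ is defined over $\F_p$, and both $k^{\sep}$ and $L^{\sep}$ contain $\overline{\F}_p$, there are natural equalities
\[
\Aut_{L^{\sep}}(B_0\otimes L^{\sep})=\Aut_{\overline{\F}_p}(B_0\otimes\overline{\F}_p)=\Aut_{k^{\sep}}(B_0\otimes k^{\sep})
\]
respecting the respective Galois actions. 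Under these identifications, $\{\xi_\sigma\}$ becomes an element of $Z^1(\Gal(k^{\sep}/k),\Aut(B_0\otimes k^{\sep}))$.

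Finally, I would apply Weil's Galois descent, as recalled just above the statement, to this cocycle over $k$ to produce a $k$-form $B'$ of $B_0\otimes_{\F_p}k$. By construction, $B'_L$ is the $L$-form of $B_0\otimes_{\F_p}L$ attached to the same cocycle as $B$, which forces $B'_L\simeq B$. The step I expect to be the main obstacle is the very first descent from $\bar L$ down to $L^{\sep}$; once that rigidity/étaleness point is established, the rest is formal and amounts to the insensitivity of the first Galois cohomology of $\Aut(B_0)$ under the purely inseparable extension $L/k$, repackaged concretely via the Galois-group isomorphism $\Gal(L^{\sep}/L)\simeq\Gal(k^{\sep}/k)$.
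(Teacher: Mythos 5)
Your argument is correct and follows essentially the same route as the paper: exhibit $B$ as a form of a constant superspecial abelian variety coming from $\F_p$, check that the trivializing isomorphism is defined over a separable extension of $L$ (you via the \'etale Isom-scheme, the paper via Lemma~\ref{23} applied to the maximal separable subextension), transport the $1$-cocycle through the identification of Galois groups and of automorphism groups across the purely inseparable extension $L/k$, and conclude by Weil's Galois descent. The only cosmetic difference is that you work with the full separable closures (so one should note the cocycle factors through a finite Galois quotient before invoking the finite-extension form of Weil descent), whereas the paper works from the start with a finite Galois extension $K/L$ and its separable counterpart $k_s/k$.
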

\begin{proof}
  Take any superspecial abelian variety $A$ over $k$ of dimension
  $g$. For example let $A=E^g\otimes_{\Fp}k$, where $E$ is a
  supersingular elliptic curve over $\Fp$. By Theorem~\ref{Ogus_et}, 
  there is a finite field extension $K$ over $L$ and 
  a $K$-isomorphism $\varphi: 
  B_K\simeq A_L\otimes_L K$. By Lemma~\ref{23},  the isomorphism $\varphi$
is defined over the maximal separable extension $L_s$ of $L$ in
$K$. Replacing $K$ by $L_s$ and $L_s$ by its Galois closure 
we may assume that $K$ is finite Galois over $L$. We review $B$ as a
$K/L$-form of $A_L$ and there is a corresponding 1-cocycle 
$\{\xi_\sigma\}$ of $\Gal(K/L)$ with
values in $\Aut(A_K)$. Let $K_1$ be the maximal separable field
extension of $k$ in $K$; it is the field generated by
sufficiently high $p$-th powers of elements of $K$ over $k$. Then
$K/K_1$ is a purely inseparable field extension of degree $[L:k]$, $L$
and $K_1$ are linearly disjoint over $k$, and the restriction gives an
isomorphism $\Gal(K/L)\simeq \Gal(K_1/k)$:
\[ 
\begin{CD}
  L @>{\rm Gal}>> K \\
  @AA{\rm insep}A @AA{\rm insep}A \\
  k @>>{\rm Gal}> K_1\, . 
\end{CD} \]
 Identifying $\Gal(K/L)$
with $\Gal(K_1/k)$, and $\Aut(A_K)$ with $\Aut(A_{K_1})$ due to
Lemma~\ref{23}, we regard $\{\xi_\sigma\}$ as a 1-cocycle of
$\Gal(K_1/k)$ with values in $\Aut(A_{K_1})$. By the Galois descent
there is an abelian variety $B'$ over $k$ corresponding to 
$\{\xi_\sigma\}$. As $B'_L$ and $B$ give rise to the same $1$-cocycle,
they are isomorphic. \qed
\end{proof}

{\bf Proof of Theorem~\ref{1.1} (1)} 
There is nothing to prove if $g=\dim(A)=1$; we may assume that $g\ge
2$. By Lemma~\ref{22}, there is a superspecial abelian variety $B$
over a finite purely inseparable field extension $L/k$ and an
$L$-isogeny $A_L\to B$. By Lemma~\ref{24}, there is a superspecial
abelian variety $C$ over $k$ and an $L$-isomorphism $B\simeq
C_L$. Thus, there is an $L$-isogeny $\varphi:A_L \to
C_L$. By Lemma~\ref{23}, $\varphi$ is defined over
$k$. \qed  

\section{Construction of examples for Theorem~\ref{1.1} (2)}
\label{sec:03}


We shall construct a supersingular abelian variety satisfying the
property in Theorem~\ref{1.1} (2). More precisely,
for each prime $p$, we find a supersingular elliptic curve in \ch $p$ 
which is not directly defined over a finite field.  

We need some fine arithmetic results of definite quaternion
$\Q$-algebras; for example see \cite[Proposition 3.1, p.~145]{vigneras}.  
Let $\grA$ be a definite quaternion $\Q$-algebra, $\calO$ a
maximal order in $\grA$ and $G=\calO^\times$. Except for 
$\grA=B_{2, \infty}$ or $\grA=B_{3,\infty}$, the group $G$ is cyclic
of order $2, 4$ or $6$.

When $\grA=B_{2,\infty}=(-1,-1/\Q)$, the class number of $\calO$ is 
one and any maximal order is conjugate to $\calO$. 
Using the mass formula, $\# G=24$. Furthermore, one has
\[ G=E_{24}=\left \{\pm 1, \pm i, \pm j, \pm k, \frac{ \pm 1 \pm i \pm
    j \pm k}{2} \right \}. \]

When $\grA=B_{3,\infty}=(-1,-3/\Q)$, the class number of $\calO$
is one and any maximal order is conjugate to $\calO$. Using the mass
formula, $\# G=12$. The group $G$ is isomorphic to the binary 
dihedral group of order 12
\[ G=T_{12}=\<a,b\,|\, a^6=1, b^2=a^3, bab^{-1}=a^{-1} \>. \]
 
Choose a supersingular elliptic curve $E_0$ over
$\F_{p^2}$ with Frobenius endomorphism $\pi_{E_0}=-p$. Then 
$\End^0(E_0)\simeq B_{p,\infty}$ and 
$\End(E_0)$ is a maximal order. 
Put $G=\Aut(E_0)$.

Choose an integer $m>1$ with $m\mid p^2-1$ and an element $\zeta\in
\F_{p^2}^\times$ of order $m$. Such an integer $m$ always exists for
any prime $p$; for example let $m=3$ if $p=2$, and $m=2$ if $p$ is
odd. Consider $k:=\F_{p^2}(T)$ and
$K:=\F_{p^2}(T^{1/m})$, where $T$ is a variable. 
Then $K/k$ is a cyclic extension with Galois
group $\Gal(K/k)=\<\sigma_m\>$, where $\sigma_m(T^{1/m})=\zeta 
T^{1/m}$. Since all endomorphisms of $E_0\otimes \Fpbar$ are defined
over $\F_{p^2}$,  the group $\Gal(K/k)$ acts trivially 
on $G=\Aut(E_0\otimes K)$. 
 
By (\ref{eq:2.1}), the set $E(K/k,E_0\otimes k)$ is
in bijection with 
\begin{equation}
  \label{eq:3.1}
  H^1(\Gal(K/k), G)\simeq \Hom(\Gal(K/k), G)/G\simeq
\Hom(\Z/m\Z,G)/G,
\end{equation}
where $G$ acts $\Hom(\Z/m\Z,G)$ by conjugation. Note that the set
$E(K/k, E_0\otimes k)$ contains a non-trivial class.

\begin{prop}\label{3.1} \

{\rm (1)} Let $E/k$ be an elliptic curve in a non-trivial
class in $E(K/k, E_0\otimes k)$. Then $E$ is not $k$-isogenous to an
elliptic curve $E'/k$ which is directly defined over
$\F_{p^2}$. In particular, $E$ is not directly defined over $\F_{p^2}$.

{\rm (2)} If $m=3$ and $p=2$, then $\End^0(E)\simeq \Q(\zeta_3)$, where
$\zeta_n$ denotes a primitive $n$th root of unity. 

{\rm (3)} If $m=2$ and $p$ is odd, then $\End^0(E)\simeq B_{p,\infty}$. 
\end{prop}
\begin{proof}
(1) Suppose contrarily that there
is an elliptic curve $E'_0$ over $\F_{p^2}$ and a
$k$-isogeny $\varphi: E'_0\otimes k\to E$. 
Choose a $K$-isomorphism $\alpha_K:E_0\otimes_{\F_{p^2}} K\isoto
E\otimes_k K$. We get a $K$-isogeny 
$\beta_K: E_0'\otimes_{\F_{p^2}} K\to E_0\otimes_{\F_{p^2}} K$ such
that $\varphi_K=\alpha_K\circ \beta_K$, where
$\varphi_K=\varphi\otimes K$. 
Clearly $K/\F_{p^2}$ is primary, by Lemma~\ref{23}, $\beta_K$ is
defined over $\F_{p^2}$. As $\beta_K$ and $\varphi_K$ are defined over
$k$, the isomorphism $\alpha_K$ is defined over $k$ but $E$ is not
$k$-isomorphic to $E_0\otimes_{\F_{p^2}} k$, a contradiction.  


(2) Let $\{\xi\}$ be a 1-cocycle representing $E$, which can be viewed
    as an element in $\Hom(\Z/m\Z,G)$. Then $\xi_{\sigma_m}=\omega$,
    where $\omega\in G$ is an element of order $3$. By Lemma~\ref{end}
    we have 
\[ \End^0(E)\simeq \{a\in \End^0(E_0\otimes K)\mid \xi_\sigma
\sigma(a) \xi_\sigma^{-1}=a, \ \forall\, \sigma\in \Gal(K/k)\}. \]
Therefore, $\End^0(E)$ is isomorphic to the centralizer of
$\Q(\omega)$ in $B_{2,\infty}$ and $\End^0(E)=\Q(\omega)$.

(3) Using the same argument as (2), $\End^0(E)$ is isomorphic to the 
centralizer of $\Q$ in $B_{p,\infty}$ and hence $\End^0(E)\simeq
    B_{p,\infty}$. \qed      
\end{proof}

Theorem~\ref{1.1} (2) follows from Proposition~\ref{3.1} (1).

\begin{remark}\label{3.2}
(1)  It follows from Proposition~\ref{3.1} that for any prime $p$, there
  exists an elliptic curve over $k$ with smCM that is not
  $k$-isogenous to an elliptic curve which is directly defined over a
  finite field. This shows that 
  the condition ``up to a finite extension'' in
  Grothendieck's theorem is necessary. 

(2) It follows from Theorem~\ref{1.1} that there exists a superspecial
    abelian variety which is not directly defined over a finite
    field. Indeed, by Theorem~\ref{1.1} (2), there is a supersingular
    abelian variety $A$ over some field $k$ which is not $k$-isogenous
    to an abelian variety $A'$ directly defined over a finite
    field. Let $A_0$ be a superspecial abelian variety over $k$ which
    is $k$-isogenous to $A$, then $A_0$ can not be directly defined
    over a finite field by the property of $A$. Proposition~\ref{3.1}
    exhibits an example of superspecial abelian variety which is not
    directly defined over a finite field. 
\end{remark}

\section{Proof of Theorem~\ref{thm:end-ss}}
\label{sec:04}

\subsection{Part (1): case $p=2, 3$}
\label{sec:4.1}

We shall need some results arising from the inverse Galois problem. 
A useful result is that any finitely generated
infinite field $L$ over its prime field 
is {\bf Hilbertian}
(cf. \cite[p.~298]{schinzel:poly}),
that is, the Hilbert irreducibility theorem for $L$ holds. In
particular the rational function field $\F_{q}(T)$ is Hilbertian.

Let $E_0/\F_{p^2}$ and $k=\F_{p^2}(T)$ be as in Section~\ref{sec:03}. 
Let $p=2$ and  $Q=\{\pm 1, \pm i, \pm j, \pm k\}  \subset
G:=\Aut(E_0)=E_{24}$ the
quaternion subgroup of order $8$. We know that there is a generic
Galois extension $L/k(s)$ with Galois group $Q$ (see
\cite[Theorem 6.1.12, p.~140]{jensen-ladet-yui}), where $s$ is a 
variable. By the Hilbert
irreducibility theorem there is a finite Galois extension $K/k$
with Galois group $Q$. 
Choose an isomorphism $\xi:\Gal(K/k)\isoto Q\subset G$
and let $E\in E(K/k, E_0\otimes k)$ be the member corresponding to 
the $1$-cocycle $\{\xi_\sigma\}$ 
(noting that $\Gal(K/k)$ acts trivially on $G$).
By the same computation
as in Proposition~\ref{3.1}, 
$\End^0(E)$ is isomorphic to the
centralizer of $Q$ in $B_{2,\infty}$. Clearly $\Q(Q)=B_{2,\infty}$ and
$\End^0(E)=\Q$.


Now $p=3$. Similarly using the Hilbert irreducibility theorem there is 
a finite Galois extension $K/k$ with Galois
group $S_3$; see \cite[Remark, p.~29]{jensen-ladet-yui}.
Choose an isomorphism $\xi:\Gal(K/k)\isoto S_3\subset T_{12}=G$
and let $E\in E(K/k, E_0\otimes k)$ be the member corresponding to 
the $1$-cocycle $\{\xi_\sigma\}$. Using the same argument,
$\End^0(E)$
is isomorphic to the centralizer of $S_{3}$ in $B_{3,\infty}$ and 
hence $\End^0(E)=\Q$.

\subsection{Part (1): case $p>3$.} 
\label{sec:4.2}

Since $p\not\equiv 1\pmod {12}$, one has
$p \equiv 3 \pmod 4$ or $p\equiv 2 \pmod 3$. 
Put $m=4$ if $p\equiv 3 \pmod 4$ and $m=6$ if $p\equiv 2 \pmod
3$ (choose any $m\in\{4,6\}$ when $p\equiv 11 \pmod {12}$). 
There is a supersingular elliptic curve $E_0$ over $\Fp$ with
$\Aut(E_0\otimes \F_{p^2})=C_m=\<\eta\>$, where $C_m$ is the cyclic
group of order $m$ and $\eta$ is a generator. 
For example, let $E_0$ be the elliptic curve defined 
by $y^2=x^3-x$ or $y^2=x^3+1$ for $p \equiv 3 \pmod 4$ or $p\equiv 2
\pmod 3$, respectively.
Let $k=\Fp(T)$  and $\zeta_m\in \F_{p^2-1}^\times$ an element
of order $m$.  
Note $m|p^2-1$ and $m\nmid p-1$, thus $\zeta_m\not\in
\Fp^\times$ and $\F_{p^2}=\Fp[\zeta_m]$. Put $K=\F_{p^2}(T^{1/m})$ and 
$k_2=\F_{p^2}(T)$. Then
$K/k$ is finite Galois with dihedral Galois group $D_{2m}$ of 
order ${2m}$, 
which is  generated by $\tau$ and $c$, where 
\[ c(\zeta_m)=\zeta_m^{-1}, 
\ \ c(T^{1/m})=T^{1/m}, \ \ \tau(T^{1/m})=\zeta_m
T^{1/m},\ \  \tau\in \Gal(K/k_2). \]
We may identify $\Gal(k_2/k)=\Gal(\F_{p^2}/\Fp)=\{1,c\}$ and $c$ acts
on $\Aut(E_0\otimes \F_{p^2})$ by $c(\eta)=\eta^{-1}$. 

Define a $1$-cocycle $\{\xi_\sigma\}\in Z^1(\Gal(K/k), C_m)$ by
\[ \xi_{\tau^i}=\eta^{i}, \quad \xi_{c \tau^i}=\eta^{-i}
d, \quad \forall\, i=0,\dots, m-1, \]
where $d$ is any element in $C_m$, and let $E$ be the corresponding
elliptic curve over $k=\Fp(T)$. Then 
\[ \End^0(E)\simeq \{a\in \End^0(E_0\otimes K)\, |\, \xi_\sigma
\sigma(a)\xi_\sigma^{-1}=a, \ \forall\, \sigma\in \Gal(K/k)\}. \]
Let $a\in \End^0(E)$ be an element. Put $\sigma=\tau$, then one has 
$a\in \Q(\eta)$. Put $\sigma=c$, then 
$c(a)=a$ implies $a\in \Q$. Thus, $\End^0(E)=\Q$.

\subsection{Part (2)} 
\label{sec:4.3}
Theorem~\ref{thm:end-ss} (2) will follow from the following two
lemmas.


\begin{lemma}\label{4.1}
  If $p>3$ and the base field $k$ contains $\F_{p^2}$, then the
  endomorphism algebra of any supersingular elliptic curve $E$ over $k$
  is isomorphic to either $B_{p,\infty}$, $\Q(\zeta_4)$, or
  $\Q(\zeta_6)$. In particular, $\End^0(E)\neq \Q$.
\end{lemma}
\begin{proof}
  We know that any supersingular $j$-invariant is contained $\F_{p^2}$
  and that any elliptic curve $E'$ over an \ac field of \ch $p$ has 
  a model defined over $\Fp(j)$. There is a finite Galois extension
  $K/k$, a supersingular elliptic curve $E_0$ over $\F_{p^2}$ and  
  a $K$-isomorphism $E\otimes K\simeq
  E_0\otimes_{\F_{p^2}} K$, that is $E$ is a $K/k$-form of $E_0\otimes
  k$.  Replacing $E_0$ by a form of itself and increasing 
  $K$ if necessarily we may assume that $\End^0(E_0)\simeq
  B_{p,\infty}$. Since all $\Fpbar$-endomorphisms of $E\otimes \Fpbar$
  is defined over $\F_{p^2}$, the group $\Gal(K/k)$ acts trivially on
  $\End(E_0\otimes K)$. As $p>3$, the automorphism group
  $G=\Aut(E_0)$ is abelian and
  $H^1(\Gal(K/k), G)\simeq \Hom(\Gal(K/k),G)$. Let $\xi\in
  \Hom(\Gal(K/k), G)$ be the $1$-cocycle corresponding to $E$.
  Similarly, $\End^0(E)$ is isomorphic to the centralizer of the image of
  $\xi$. It follows that $\End^0(E)\simeq  B_{p,\infty}$ or $\Q(\zeta_m)$
  according as ${\rm Im}(\xi)\subset\{\pm 1\}$ or 
  ${\rm Im}(\xi)=C_m$ with $m=4,6$. 
  \qed
\end{proof}


\begin{lemma}\label{odd}
  Assume that $p\equiv 1\pmod {12}$ and $k\not \supset \F_{p^2}$. Then
  for any supersingular elliptic curve $E$ over $k$, one has
  $\End^0(E)\simeq \Q(\sqrt{-p})$. 
\end{lemma}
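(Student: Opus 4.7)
My plan is to reduce to the case where the elliptic curve is defined over $\F_p$, and then apply the Honda--Tate description of endomorphism algebras of supersingular elliptic curves over finite fields.

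First I would show $j(E)\in \F_p$. Since $E$ is supersingular, $j(E)\in \F_{p^2}$; since $E$ is a $k$-curve, $j(E)\in k$; and the intersection $k\cap \F_{p^2}$ taken inside a fixed algebraic closure is $\F_p$ by the hypothesis $k\not\supset \F_{p^2}$ and $[\F_{p^2}:\F_p]=2$. The assumption $p\equiv 1\pmod{12}$ makes the curves with $j=0,1728$ ordinary, so $j(E)\notin\{0,1728\}$. I would then pick a supersingular elliptic curve $E_0$ over $\F_p$ with $j(E_0)=j(E)$ (trivially realized by a Weierstrass equation with coefficients in $\F_p$), giving $E\otimes \bar k\simeq E_0\otimes \bar k$.

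Next I would argue that $E$ is a quadratic twist of $E_0\otimes k$. Because $j(E_0)\neq 0,1728$, one has $\Aut(E_0\otimes \bar k)=\{\pm 1\}$, so the $\bar k/k$-forms of $E_0\otimes k$ are classified by $\Hom(\Gal(\bar k/k),\{\pm 1\})$, identifying $E$ as such a twist. Since $\{\pm 1\}$ lies in the center of any endomorphism ring, the twisting cocycle $\{\xi_\sigma\}$ is central, and the general formula (\ref{eq:2.1}) collapses to $\End^0(E)\simeq \End^0(E_0\otimes k)$.

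It then remains to compute $\End^0(E_0\otimes k)$ by Galois descent. All $\bar k$-endomorphisms of $E_0$ are defined over $\F_{p^2}$, so $\End^0(E_0\otimes \bar k)=\End^0(E_0\otimes \F_{p^2})\simeq B_{p,\infty}$, and the action of $\Gal(\bar k/k)$ on this algebra factors through the quotient $\Gal(\F_{p^2}k/k)\simeq \Gal(\F_{p^2}/\F_p)=\<c\>$, which is non-trivial thanks to $k\not\supset \F_{p^2}$. Hence $\End^0(E_0\otimes k)=\End^0(E_0)$. Finally, since $p\ge 13$ and $E_0$ is supersingular over $\F_p$, the Frobenius has trace $a_p=0$ and satisfies $\pi_{E_0}^2=-p$, so $\End^0(E_0)=\Q(\pi_{E_0})=\Q(\sqrt{-p})$.

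The main point requiring care, though it is really just bookkeeping, is verifying that the $\Gal(\bar k/k)$-action on $B_{p,\infty}$ factors through $\Gal(\F_{p^2}/\F_p)$; this rests on the standard fact that supersingular endomorphisms are rational over $\F_{p^2}$, combined with $E_0$ being defined over $\F_p$. Beyond that, the argument is uniform in $k$: once $j(E)\in \F_p$ is established, everything descends to a comparison with a canonical model over $\F_p$, and the condition $p\equiv 1\pmod{12}$ is exactly what forces $j(E)\notin\{0,1728\}$ and makes the $\F_p$-reduction go through cleanly.
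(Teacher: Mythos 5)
Your proof is correct, and it takes a genuinely different route to the key intermediate object. You produce the $\F_p$-model directly from rationality of the $j$-invariant: $j(E)\in\F_{p^2}$ by supersingularity, $k\cap\F_{p^2}=\F_p$ since $k\not\supset\F_{p^2}$, and $p\equiv 1\pmod{12}$ rules out $j=0,1728$, so $E$ is a quadratic twist of a Weierstrass model $E_0$ over $\F_p$ with $\Aut(E_0\otimes\bar k)=\{\pm1\}$. The paper never mentions the $j$-invariant: it first reduces to $k$ finitely generated over $\F_p$, chooses a model $E_1$ over $\F_{p^2}$ of $E\otimes\bar k$ with $\End^0(E_1)=B_{p,\infty}$, identifies $\Aut(E\otimes k_2)$ with $\Aut(E_1)=\{\pm1\}$ via formula (\ref{eq:2.1}) and Gross's table, and only then obtains an $\F_p$-model $E_0$ by transporting the $k_2/k$-descent datum of $E$ to a $1$-cocycle in $Z^1(\Gal(\F_{p^2}/\F_p),\{\pm1\})$. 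From that point the two arguments agree in substance: the twisting cocycle is central, so (\ref{eq:2.1}) collapses to $\End^0(E)\simeq\End^0(E_0\otimes k)$, which equals $\Q(\sqrt{-p})$ --- you compute it as the $\Gal(\F_{p^2}/\F_p)$-invariants of $B_{p,\infty}$, i.e.\ $\End^0(E_0)$, together with the Honda--Tate table over $\F_p$, while the paper invokes Lemma~\ref{23} (primary extensions) to reduce to $\End^0(E_0\otimes\F_q)$ and then Tate's theorem. What your route buys is a shortcut: no reduction to finitely generated $k$ and no cocycle-transfer step, since $j(E)\in\F_p$ hands you the descended curve for free; what it costs is that it is special to elliptic curves (it leans on the $j$-line), whereas the paper's descent mechanism is the same one used throughout Sections 2--4. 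One point you flagged that should be made explicit: the ``standard fact'' that all geometric endomorphisms are $\F_{p^2}$-rational is model-dependent (it fails, e.g., for supersingular models over $\F_{p^2}$ with Frobenius $p\zeta_4$); for your $E_0$ it holds because $p>3$ forces $a_p=0$, so the $\F_{p^2}$-Frobenius is $\pi_{E_0}^2=-p$, which is central, and Tate's theorem then gives $\End^0(E_0\otimes\F_{p^2})=B_{p,\infty}$ of full rank $4$, so no further endomorphisms appear over $\bar k$. With that one line added, your argument is complete.
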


\begin{proof}
  Replacing $k$ by a subfield of itself we may assume that $k$ is
  finitely generated over $\Fp$. The algebraic closure $\Fq$ of $\Fp$
  in $k$ has cardinality $q=p^a$ of an odd power of $p$. 
  
  Since $j(E)\in \Fq \cap \F_{p^2}=\Fp$, there is a supersingular
  elliptic curve $E_0$ over $\Fp$, a finite Galois extension $K/k$,
  and a $K$-isomorphism $E\otimes K \simeq E_0\otimes K$ (see the proof
  of Lemma~\ref{4.1}). Particularly 
  $E$ is a $K/k$-form of $E_0\otimes k$. The Frobenius endomorphism
  $\pi_{E_0}$ of $E_0$ satisfies $\pi_{E_0}^2=-p$ as $p>3$. 
  Since the
  Frobenius endomorphism of $E_0\otimes \Fq$ is not in $\Q$, one has
  $\End^0(E_0\otimes \Fq)=\Q(\pi_{E_0}^a)=\Q(\sqrt{-p})$. 
  By Lemma~\ref{23}, $\End^0(E_0\otimes
  k)=\End^0(E_0\otimes \Fq)=\Q(\sqrt{-p})$.   
  Our assumption of $p$ implies that $\Aut(E_0\otimes K)=\{\pm 1\}$ 
  (see \cite[Table 1.3, p.~117 ]{gross:ht_L87}), which is
  contained in the center of $\End(E_0\otimes K)$. Finally 
  by Lemma~\ref{end} one has
  \begin{equation}
    \label{eq:end}
    \begin{split}
    \End^0(E) & \simeq 
     \{a\in \End^0(E_0\otimes K)\mid \sigma(a)=a, \ \forall\,
        \sigma\in \Gal(K/k) \}\\ 
    & =\End^0(E_0\otimes k)\simeq \Q(\sqrt{-p}). \quad \text{\qed}       
    \end{split}
  \end{equation}

\end{proof}

For the convenience of the reader, 
we make the following table of isogeny classes and endomorphism
algebras of supersingular elliptic curves over finite fields 
(cf. \cite[Chapter 4]{waterhouse:thesis}). 
Here $E$ denotes a  supersingular elliptic curve over $\Fq$, 
$q=p^a$, $\zeta_n:=\exp (2\pi i/n)\in \C$ and 
$\pi$ is the Frobenius endomorphism of $E$, which is represented by 
a Weil $q$-number. 


 

\begin{center}
\begin{tabular}{ |c|c|c|c| }
\hline
 & \multicolumn{3}{ |c| }{$a$ is even}  \\ \hline
$\pi$ & $\pm p^{a/2}$ & $p^{a/2}\zeta_4$, $p\not \equiv 1 \pmod 4$  & 
  $\pm p^{a/2}\zeta_6, p\not \equiv 1 \pmod 3$ \\ \hline
$\End^0(E)$ & $B_{p,\infty}$ & $\Q(\sqrt{-1})$ & $\Q(\sqrt{-3})$ 
    \\ \hline \hline
  &   \multicolumn{3}{ |c| }{$a$ is odd} \\ \hline
$\pi$ &  $\sqrt{q} \zeta_4$ & $\pm \sqrt{2^{a}}\zeta_8$ 
  & $\pm \sqrt{3^{a}}\zeta_{12}$ \\ \hline
$\End^0(E)$ & $\Q(\sqrt{-p}) $ & $\Q(\sqrt{-1})$ & 
  $\Q(\sqrt{-3})$ \\ \hline \hline
\end{tabular}
\end{center}

\ \\

\begin{center}
  {\bf Erratum to \cite{yu:endo}}
\end{center} \

Very unfortunately the main result Theorem 1.2 in \cite{yu:endo} 
is not correct as stated. We apologize to the Journal for overlooks. 
Theorem 1.1 is an equivalent statement of 
Theorem 1.2. 
Our purpose of formulating this theorem was to introduce invariants 
which could control the finiteness. However, this approach turns out
not working well. The errors occur at the proofs of two reduction steps
Lemmas 2.2 and 2.4, loc.cit.

To see that the second assertion (2.2) of Lemma 2.2 is false, 
take a supersingular elliptic curve
$E$ over $\F_7$ whose endomorphism ring is $\Z[\sqrt{-7}]$, which has
index $2$ in the maximal order. However, the endomorphism ring of
$E\otimes \F_{p^2}$ is a maximal order of the quaternion algebra over
$\Q$ ramified at $\{7,\infty\}$. 
      
To correct (2.2) of Lemma 2.2 one should impose an additional
condition that $\End(A)$ is contained in the center of $\End(A_{k'})$, 
where $A_{k'}=A\otimes_{k} k'$. \\
Proof: Since $\End(A\otimes k^{\rm perf})=\End(A)$, where $k^{\rm
  perf}$ is the perfect closure of $k$, we may assume that both $k$ and
$k'$ are perfect by replacing them by their perfect closure. 
Choose a maximal order $O_1$ in the endomorphism algebra
   $\End^0(A)$ containing $\End(A)$. Let $M\subset M'$ be the \dieu
   modules of $A$ and $A_{k'}$, respectively. Put $M_1:=O_1 M$
   and let $A\to A_1$ be the isogeny (unique up to isomorphism)
   realizing $M\to M_1$. Write $\alpha:\End^0(A)\isoto \End^0(A_1)$
   for the isomorphism. Then $\alpha(O_1)=\End(A_1)$ and it is
   contained in $\End(A_{1k'})$. Now from our assumption that 
   $O_1$ commutes with
   $\End(A_{k'})$, the latter leaves the \dieu module 
  $M_1':=M(A_1\otimes k')=O_1
   M'$ stable. This shows that $\alpha(\End(A_{k'}))\subset
   \End(A_{1k'})$. An element $x\in O_{1p}=O_1\otimes \Zp$ 
   lies in $O_p$ if and only if $xM\subset M$, or the same 
   $xM'\subset M'$. 
   Thus, $\alpha(O_{1p})\cap \alpha(\End(A_{k'})\otimes
   \Zp)=\alpha(\End(A)\otimes \Zp)$ and we have an injective map
\[ \alpha: \frac{O_{1p}}{\End(A)\otimes \Zp} \to \frac
{\End(A_{1k'})\otimes \Zp}{\alpha(\End(A_{k'})\otimes \Zp)}.\]
Using the Tate modules instead of \dieu modules, we get an injective map
\[ \alpha: \frac{O_{1}}{\End(A)} \to \frac
{\End(A_{1k'})}{\alpha(\End(A_{k'}))}. \quad \text{\qed}\]

The statement of Lemma 2.4 should be corrected by imposing an additional
condition that $\End(A)\otimes \Zp$ is contained in the center of
$\End(A[p^\infty])$. This condition ensures that $\End(A[p^\infty])$
is contained in $\End(A'[p^\infty])$ in the proof of Lemma 2.4. \\

\npr {\bf Counterexample to Theorem 1.2.} 
Choose a simple ordinary abelian surface $A_0$ over
$\Fpbar$. Modifying $A_0$ by an isogeny, we can assume that 
its endomorphism ring $O:=\End(A_0)$ is the 
maximal order in a CM field $K:=\End^0(A_0)$ \cite[Theorem
1.3]{yu:mo}. Let $M_0$ be the \dieu
module of $A_0$, and let $M_0=M_0^0\oplus M_0^1$ be the isotypic
decomposition with slopes $0$ and $1$, respectively. The endomorphism
ring $\End(M_0)=\End(M_0^0)\times \End(M_0^1)$ also decomposes. 
We fix a $\Zp$-basis for the skeleton of
each $M_0^i$ (\cite[(3.4), p.~363]{yu:endo}) and have
$\End(M_0^i)=\Mat_2(\Zp)$.    
The action of the order $O_p:=O\otimes \Zp$ leaves each component
$M_0^i$ invariant. As $O$ is the maximal order in $K$, 
one has $O_p=O_p^0\times
O_p^1$ and $O_p^i\subset \End(M_0^i)$ for $i=1,2$. Consider a \dieu
submodule $M_1=M_1^0\oplus M_1^1\subset M_0$ with $M_1^1=M_0^1$, and
let $A_1\to A_0$ be the isogeny realizing the inclusion $M_1\subset
M_0$. Since $M_1^0$ is isomorphic to $M_0^0$, it is of the form
$gM^0_0$, where $g\in \Aut(M_0^0[1/p])=\GL_2(\Qp)$. Note
$\End(A_1)$ is contained in $O$ as $O$ is the unique maximal order in
$K$. Therefore, $\End(A_1)\otimes \Zp=O^0_{1p}\times O_p^1$ with
$O^0_{1p}=O_p^0 \cap g \End(M_0^0) g^{-1}=O_p^0\cap g \Mat_2(\Zp) g^{-1}$. Thus,
\[ [O_p:\End(A_1)\otimes \Zp]=[O_p^0: O_p^0\cap g \Mat_2(\Zp) g^{-1}]. \]
This index is unbounded for all $g\in \GL_2(\Qp)$.

\begin{lemma*}
  Let $A$ be an abelian variety over a perfect field $k$, and let
  $A\to A^{\rm min}$ be its minimal isogeny. Let $M\subset M_0$ be
  the \dieu modules of this isogeny and put $\ol M:=M\otimes_W W(\bar
  k)$ and $\ol M_0:=M_0\otimes_W W(\bar k)$. Let
\[ O:=\End(A), , \Lambda:=\End(M),
, \ol \Lambda:=\End(\ol M), \]
\[ O_0:=\End(A^{\rm min}), \quad \Lambda_0:=\End(M_0),\quad
\ol \Lambda_0:=\End(\ol M_0). \]
Then we have inclusions
\[  \frac{O_0}{O}= \frac{O_0\otimes \Zp}{O\otimes \Zp}\subset
\frac{\Lambda_0}{ \Lambda} \subset  
\frac{\ol \Lambda_0}{\ol \Lambda}. \] 
\end{lemma*}
\begin{proof}
  By Lemma~\ref{M.4} of this note, 
  the minimal \dieu module $M_0$ is generated by
  $M$ and certain operators only involving powers of $F$, $V$, $p$. As
  elements of $\End(M)$ commute with these operators, one has
  $\End(M)\subset \End(M_0)$ and $O\subset O_0$, 
  and hence $\End(\ol M)\subset \End(\ol
  M_0)$. Now the lemma follows from $O_0\cap \End(M)=O$ and
  $\Lambda_0\cap \End(\ol M)=\End(M)$. \qed  
\end{proof}
  
For any field $k$ and integer $g\ge 1$, denote by $\calA_g^{un}(k)$
the set of isomorphism classes of $g$-dimensional abelian varieties
over $k$. 

\begin{cor*}
  Let $k$ be any field of \ch $p$ and $g\ge 1$ a fixed integer. 
  Let $\calB\subset
  \calA^{un}_g(k)$ be a subset which is stable under isogeny. Then the
  following two statements are equivalent.

(a) There are only finitely many isomorphism classes of $\Zp$-orders 
    $\End(A)\otimes \Zp$ for all abelian varieties $A\in \calB$. 

(b) There are only finitely many isomorphism classes of $\Zp$-orders 
    $\End(A)\otimes \Zp$ for all minimal abelian varieties $A\in \calB$.
\end{cor*}
\begin{proof}
  Again we may assume that $k$ is perfect as $\End(A_{k^{\rm
      perf}})=\End(A)$, and $A$ is minimal if and only if so is $A_{k^{\rm
      perf}}$. 
  By \cite[Theorem 2.5]{yu:endo}, $|\ol \Lambda_0/\ol \Lambda|\le
  p^N$ for an integer $N$ which depends only on $g$. Then the
  corollary follows from {\bf Lemma}. \qed 
\end{proof}

We modify Theorem 1.2 as the following weaker result, which suffices
to imply Corollary 1.3 and Theorem 1.4 (Deuring) in \cite{yu:endo}.
See \cite[Definition 2.1]{chai-oort:hyper} for the definition of 
hypersymmetric abelian varieties. 

\begin{thm*}
  Notations as in {\bf Corollary}, let $\calI\subset \calA_g^{un}(k)$
  be one isogeny class. There are only finitely
  many isomorphism classes of $\Zp$-orders  
    $\End(A)\otimes \Zp$ for all abelian varieties $A\in \calI$
    provided one of the
    following conditions holds:
  
(a) There are only finitely many isomorphism classes of $\Zp$-orders 
    $\End(A)\otimes \Zp$ for all minimal abelian varieties $A\in \calI$.

(b) The field $k$ is \ac and any member $A$ in $\calI$ is
hypersymmetric.    
\end{thm*}

Condition (b) implies condition (a). 






\section*{Acknowledgments}

The author is grateful to Ching-Li Chai for helpful discussions on
Theorem~\ref{1.1}. 
The present work is done while the author's stay in the Max-Planck-Institut
f\"ur Mathematik. He is grateful to the Institut for kind hospitality
and excellent working environment. 
The author is partially supported by the grants 
MoST 100-2628-M-001-006-MY4 and 103-2918-I-001-009.






\def\jams{{\it J. Amer. Math. Soc.}} 
\def\invent{{\it Invent. Math.}} 
\def\ann{{\it Ann. Math.}} 
\def\ihes{{\it Inst. Hautes \'Etudes Sci. Publ. Math.}} 

\def\ecole{{\it Ann. Sci. \'Ecole Norm. Sup.}}
\def\ecole4{{\it Ann. Sci. \'Ecole Norm. Sup. (4)}} 
\def\mathann{{\it Math. Ann.}} 
\def\duke{{\it Duke Math. J.}} 
\def\jag{{\it J. Algebraic Geom.}} 
\def\advmath{{\it Adv. Math.}}
\def\compos{{\it Compositio Math.}} 
\def\ajm{{\it Amer. J. Math.}} 
\def\crelle{{\it J. Reine Angew. Math.}}
\def\plms{{\it Proc. London Math. Soc.}}
\def\jussieu{{\it J. Inst. Math. Jussieu}} 
\def\grenoble{{\it Ann. Inst. Fourier (Grenoble)}}
\def\imrn{{\it Int. Math. Res. Not.}}
\def\tams{{\it Trans. Amer. Math. Sci.}}
\def\mrl{{\it Math. Res. Lett.}}
\def\cras{{\it C. R. Acad. Sci. Paris S\'er. I Math.}} 
\def\mathz{{\it Math. Z.}} 
\def\cmh{{\it Comment. Math. Helv.}}
\def\docmath{{\it Doc. Math. }}
\def\asian{{\it Asian J. Math.}}
\def\acta{{\it Acta Math.}}
\def\indiana{{\it Indiana Univ. Math. J.}}

\def\acad{{\it Proc. Nat. Acad. Sci. USA}}

\def\jlms{{\it J. London Math. Soc.}}
\def\blms{{\it Bull. London Math. Soc.}}
\def\manmath{{\it Manuscripta Math.}} 
\def\jnt{{\it J. Number Theory}} 
\def\ijm{{\it Israel J. Math.}}
\def\ja{{\it J. Algebra}} 
\def\pams{{\it Proc. Amer. Math. Sci.}}
\def\smfmemoir{{\it Bull. Soc. Math. France, Memoire}}
\def\bsmf{{\it Bull. Soc. Math. France}}
\def\sb{{\it S\'em. Bourbaki Exp.}}
\def\jpaa{{\it J. Pure Appl. Algebra}}
\def\jems{{\it J. Eur. Math. Soc. (JEMS)}}
\def\jtokyo{{\it J. Fac. Sci. Univ. Tokyo}}
\def\cjm{{\it Canad. J. Math.}}
\def\jaums{{\it J. Australian Math. Soc.}}
\def\pspm{{\it Proc. Symp. Pure. Math.}}
\def\ast{{\it Ast\'eriques}}
\def\pamq{{\it Pure Appl. Math. Q.}}
\def\nagoya{{\it Nagoya Math. J.}}
\def\forum{{\it Forum Math. }}
\def\tjm{{\it Taiwanese J. Math.}}
\def\rt{{\it Represent. Theory}}
\def\bordeaux{{\it J. Th\'eor. Nombres Bordeaux}}
\def\ijnt{{\it Int. J. Number Theory}}
\def\jmsj{{\it J. Math. Soc. Japan}}
\def\rims{{\it Publ. Res. Inst. Math. Sci.}}
\def\ca{{\it Comm. Algebra}}
\def\osaka{{\it Osaka J. Math.}}
\def\bams{{\it Bull. Amer. Math. Soc.}}

\def\tp{{To appear in }}

\newcommand{\princeton}[1]{Ann. Math. Studies #1, Princeton
  Univ. Press}

\newcommand{\LNM}[1]{Lecture Notes in Math., vol. #1, Springer-Verlag}


\begin{thebibliography}{99}

\bibitem{chai-oort:hyper} C.-L. Chai and F. Oort, Hypersymmetric
  abelian varieties.  \pamq~{\bf 2} (2006), 1--27. 

\bibitem{chai-conrad-oort}
 Ching-Li Chai, Brian Conrad and Frans Oort, {\it Complex
 multiplication and lifting problems.} 
 Mathematical Surveys and Monographs {\bf 195}. AMS, 387 pp.

\bibitem{gross:ht_L87} B.~H.~Gross, Heights and the special
  values of L-series. Number theory (Montreal, Que., 1985), 115--187,
  CMS Conf. Proc., 7, Amer. Math. Soc., Providence, RI, 1987.  


\bibitem{harashita:satu_NP} S.~Harashita, On $p$-divisible groups with
  saturated Newton polygons. Preprint 2015, 17 pp. 



\bibitem{jensen-ladet-yui} Christian U. Jensen, Arne Ledet and Noriko Yui,
  {\it Generic polynomials. Constructive aspects of the inverse
  Galois problem}. Mathematical Sciences Research Institute
  Publications,~{\bf 45}. Cambridge University Press, 2002. 258 pp.

\bibitem{katz:slope} N. M. Katz, Slope filtration of
  {$F$}-crystals. {\it Journ\'ees de  G\'eom\'etrie. 
  Alg\'ebrique de Rennes} (1978), Vol I, pp. 113--163, 
  Ast\'erisque, 63. Soc. Math. France, Paris, 1979. 

\bibitem{li-oort} K.-Z. Li and  F. Oort, {\it Moduli of Supersingular 
  Abelian Varieties.} \LNM{1680}, 1998.

\bibitem{manin:thesis} Yu. Manin, Theory of commutative formal groups
  over fields of finite characteristic. 
  {\it Russian Math. Surveys}~{\bf 18} (1963), 1--80.

\bibitem{mumford:av} D. Mumford, {\it Abelian Varieties.} Oxford
  University Press, 1974.

\bibitem{oort:cm} F. Oort, The isogeny class of a CM-type abelian
  variety is defined over a finite extension of the prime
  field. \jpaa~{\bf 3} (1973), 399--408.


\bibitem{oort:subvar} F.~Oort, Subvarieties of moduli
  spaces. \invent~{\bf 24} (1974), 95--119.

\bibitem{oort:product} F. Oort, Which abelian surfaces are products of
  elliptic curves? \mathann~{\bf 214} (1975), 35--47.

\bibitem{oort-zink} F.~Oort and T.~Zink, Families of $p$-divisible
  groups with constant Newton polygon. \docmath~{\bf 7} (2002),
  183--201. 

\bibitem{schinzel:poly} A.~Schinzel, 
{\it Polynomials with special regard to reducibility.} 
Encyclopedia of Mathematics and its Applications 77. Cambridge
University Press, 2000. 558 pp.

\bibitem{tate:eav} J. Tate, Endomorphisms of abelian varieties over
  finite fields. \invent~{\bf 2} (1966), 134--144.

\bibitem{tate:ht} J. Tate, Classes d'isogenie de vari\'et\'es
  ab\'eliennes sur un corps fini (d'apr\`es T. Honda). \sb~{352}
  (1968/69). \LNM{179}, 1971.

\bibitem{vigneras} M.-F. Vign\'eras, Arithm\'etique des alg\`ebres de
  quaternions. \LNM{800}, 1980.

\bibitem{waterhouse:thesis} W.~C.~Waterhouse, Abelian
  varieties over finite fields. \ecole4~{\bf 2} (1969), 521--560.  

\bibitem{weil:fod} A.~Weil, The field of definition of a variety. 
\ajm~{\bf 78} (1956), 509--524. 

\bibitem{xue-yang-yu:sp_as} J.~Xue, T.-C. Yang and C.-F. Yu,
  On superspecial abelian surfaces over finite fields. \docmath~{\bf
    21} (2016), 1607--1643.  

\bibitem{yu:cm} C.-F. Yu, The isomorphism classes of abelian varieties of
  CM-type. \jpaa~{\bf 187} (2004) 305--319.

\bibitem{yu:endo} C.-F. Yu, On finiteness of endomorphism rings of
  abelian varieties. \mrl~{\bf 17} (2010), no. 2, 357--370.

\bibitem{yu:mo} C.-F. Yu, On the existence of maximal orders.
\ijnt~{\bf 7} (2011), no. 8, 2091--2114.

\bibitem{yu:QM} C.-F. Yu, Endomorphism algebras of QM abelian
  surfaces. \jpaa~{\bf 217} (2013), 907--914.

\bibitem{zink:slope} Th. Zink, On the slope filtration. \duke~{\bf
    109} (2001), 79--95.

\bibitem{zink:cartier} Th. Zink, {\it Cartiertheorie kommutativer
  formaler Gruppen. } Teubner-Texte Math. Teubner, Leipzig, 1984.

\end{thebibliography}
\end{document}